\newtheoremstyle{nonum}{}{}{\itshape}{}{\bfseries}{.}{ }{\thmnote{#3}}
\newtheorem{thm}{Theorem}[section]
\newtheorem*{thm*}{Theorem}
\newtheorem{lem}[thm]{Lemma}
\newtheorem{rem}[thm]{Remark}
\newtheorem*{definition*}{Definition}
\newtheorem*{rems*}{Remarks}
\theoremstyle{nonum}
\newcommand{\R}{\mathbb R}
\newcommand{\iprod}[2]{\langle #1,#2 \rangle} 
\newcommand{\kn}{\mathcal{K}^n}   
\def\grad{{\nabla}}
\begin{document}
\title{Stability and Rate of Convergence of\\ the Steiner Symmetrization}
\date{}
\author{D.I. Florentin, A. Segal}
\maketitle
\begin{abstract}
We present a direct analytic method towards an estimate for the rate
of convergence (to the Euclidean Ball) of Steiner symmetrizations. To
this end we present a modified version of a known stability property
of the Steiner symmetrization.
\end{abstract}

\section{Introduction and results}\label{Sec_Intro}
Let $(\R^n, \iprod{\cdot}{\cdot})$ be some fixed Euclidean structure,
and let $\kn$ be the class of all compact convex sets in $\R^n$.
Denote by $D_n$ the Euclidean unit ball, by $S^{n-1}$ its boundary
and by $\kappa_n=|D_n|$ its Lebesgue measure. Fix a direction $u \in
S^{n-1}$ and denote its orthogonal hyperplane by $H=\{x\in \R^n:
\iprod{x}{u}=0\}$. Obviously, each point $x \in \R^n$ can be uniquely
decomposed as $x=y + tu$ where $y \in H$ and $t \in \R$. The Steiner
symmetral of a set $K$ with respect to $u$ is defined to be
\[
S_u(K) = \left\{(y,t)\,:\, K \cap (y+\R u) \ne \emptyset,\quad |t|\le
\frac{|K \cap (y + \R u)|}{2}\right\}.
\]
The Steiner symmetrization has several important properties. For one,
it reduces the surface area while preserving volume. Clearly, this
process makes the set more ``round'' in some sense, so one would expect
that applying mutliple Steiner symmetrizations is a process that
converges to the Euclidean ball - the only fixed point of this
operation. It was shown by Gross \cite{Gross} that for each convex
set there exists a sequence of symmetrizations that converges in the
Hausdorff metric to a ball with the same volume. This result was
improved by Mani-Levitska \cite{Mani} where it was shown that a
random sequence of Steiner symmetrizations applied to a convex set,
converges almost surely to a ball. However, these proofs do not
provide results regarding the rate of convergence. The first estimate
of the rate is due to Hadwiger \cite{Hadwiger}, who showed that
$\left(c\frac{\sqrt{n}}{\varepsilon^2}\right)^n$ symmetrizations are
enough to transform a convex set to a new set with Hausdorff distance
at most $\varepsilon$ from the Euclidean ball. Later, Bourgain,
Lindenstrauss and Milman \cite{BLM} proved an isomorphic result,
stating that in order to reach some fixed distance from the Euclidean
Ball, roughly $n\log n$ symmetrizations suffice. In recent years this
bound was reduced to $3n$ by Klartag and Milman
\cite{KM_Isomorph-Stein}.
Klartag \cite{K_Isomet-Mink+Stein} also improved the isometric result
of Hadwiger, showing that the rate of convergene is almost
exponential. 
More precisely:
\begin{thm}[Klartag]\label{thm-Klartag-Convergence}
Let $K\in\kn$ be a convex body with $|K|=|D_n|$, and let $\varepsilon
\in (0,\frac{1}{2})$. There exist $Cn^4(\log\varepsilon)^2$ Steiner
symmetrization transforming $K$ into a body $K'$ satisfying
\[
(1-\varepsilon)D_n \subset K \subset (1+\varepsilon)D_n.
\]
\end{thm}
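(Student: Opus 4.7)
The plan is to reduce this isometric statement to an iteration of one-step contraction estimates, starting from a body that is already isomorphically close to the Euclidean ball. The proof I envisage has four ingredients: an isomorphic preparatory step, a quantitative stability inequality for a single symmetrization, a derandomized iteration, and an $L^2$-to-Hausdorff upgrade at the end.

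\emph{Isomorphic reduction.} First I would invoke the Klartag--Milman result quoted in the introduction, which produces $O(n)$ Steiner symmetrizations bringing $K$ to a body $\widetilde K$ with
\[
r D_n \subset \widetilde K \subset R D_n
\]
for universal constants $0 < r < R$. From here on all bodies considered lie in this class; the uniform two-sided bound on the radial function $\rho$ is what makes the constants in the subsequent stability estimate dimension-free.

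\emph{Modified stability.} The main analytic step is to establish a contraction, under an averaged symmetrization, of the functional
\[
\Phi(L) := \int_{S^{n-1}} \bigl(\rho_L(\theta) - 1\bigr)^2 d\sigma(\theta).
\]
Concretely, for $L$ in the class above I would aim for an estimate of the form
\[
\int_{S^{n-1}} \Phi(S_u L)\, d\sigma(u) \le \bigl(1 - c\, n^{-\alpha}\bigr) \Phi(L),
\]
with a fixed exponent $\alpha$ and a universal $c > 0$. The key point is that Steiner symmetrization is already nonexpansive on the chord-length function along each vertical line; the strict gain of order $n^{-\alpha}$ comes from expanding $\rho_L - 1$ into spherical harmonics and showing that each harmonic level contracts by a definite amount after averaging over $u$, where the uniform bound $r \le \rho_L \le R$ prevents degenerate cancellations between levels. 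This is the ``modified stability'' advertised in the abstract.

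\emph{Iteration and Hausdorff upgrade.} Because the inequality above is only in expectation, at each step there exists a deterministic direction achieving the average contraction; iterating $O(n^\alpha \log(1/\varepsilon))$ times drives $\Phi$ down to $\varepsilon^{\beta}$ for any prescribed $\beta$. Convexity of the body, combined with the two-sided bound from the isomorphic step, then lets me convert a small $L^2$ deviation of $\rho - 1$ into a small $L^\infty$ deviation via a Sobolev-type interpolation, at the cost of an additional polynomial-in-$n$ loss in the required exponent $\beta$. Balancing the exponents so as to achieve Hausdorff distance $\varepsilon$ from $D_n$ produces the stated total count $Cn^4(\log\varepsilon)^2$.

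\emph{Main obstacle.} The hard part will be obtaining the correct polynomial exponent $\alpha$ in the stability inequality. A naive estimate that only tracks the first nontrivial spherical harmonic yields a contraction per step that is too weak for the stated $n^4$ rate; extracting the right exponent demands handling all harmonic levels simultaneously, exploiting the uniform bounds from the isomorphic reduction to get uniform-in-level control. The one logarithmic factor $\log(1/\varepsilon)$ comes from the iteration count, while the second one arises when the $L^2$-to-$L^\infty$ upgrade forces $\Phi$ to be driven polynomially below the target Hausdorff error $\varepsilon$; accounting correctly for both is what produces the $(\log \varepsilon)^2$ in the bound.
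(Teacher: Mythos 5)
The paper does not prove Theorem~\ref{thm-Klartag-Convergence}; it cites Klartag~\cite{K_Isomet-Mink+Stein} and sketches his method: one first obtains a $Cn\log(1/\varepsilon)$ bound for \emph{Minkowski} symmetrization, which acts \emph{linearly} on the support function, $h_{M_uK}=\tfrac12\bigl(h_K+h_K\circ R_u\bigr)$, so a spherical-harmonic decay argument applies cleanly; the Steiner bound is then extracted indirectly from the inclusion $S_uK\subseteq M_uK$ together with volume preservation, and it is precisely this transfer that degrades $n$ to $n^4$ and introduces the second logarithmic factor. Your proposal takes a genuinely different route: a direct one-step contraction for averaged Steiner symmetrization, measured in the $L^2$ norm of the \emph{radial} function $\rho_L$. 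This is where the argument breaks. The spherical-harmonic contraction you invoke requires the symmetrization to act linearly (at least modulo controllable lower-order terms) on the function you decompose; Steiner symmetrization does not act linearly on $\rho_L$, on $\rho_L^n$, or on $h_L$, so there is no reason each harmonic level should ``contract by a definite amount after averaging over $u$.'' The remark that Steiner is ``nonexpansive on the chord-length function'' does not help: chord lengths in the direction $u$ are preserved \emph{exactly}, which says nothing about spectral decay of $\rho_L$ on the sphere. Indeed, all known direct quantitative estimates for a single Steiner step (Barchiesi--Cagnetti--Fusco, or Theorem~\ref{thm-SteinerStability} of this paper) bound a distance by a \emph{power} of the surface-area deficit, and iterating such power-law inequalities yields convergence that is polynomial in $1/\varepsilon$ rather than logarithmic -- which is exactly why the paper's own direct Steiner analysis produces $\varepsilon^{-\gamma}$ in Theorem~\ref{Thm_Main}. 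A one-step $L^2$ contraction of the form you posit, $\int_{S^{n-1}}\Phi(S_uL)\,d\sigma(u)\le(1-cn^{-\alpha})\Phi(L)$, would be a new and stronger fact than anything in the literature; it cannot simply be asserted.

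A secondary, smaller problem is the accounting for the $(\log\varepsilon)^2$ factor. Even granting the contraction estimate, driving $\Phi$ below $\varepsilon^{\beta}$ for a fixed exponent $\beta$ (so that a polynomial-in-$n$ $L^2\!\to L^\infty$ loss can be absorbed) costs only $O(n^{\alpha}\beta\log(1/\varepsilon))$ iterations -- still a single power of $\log(1/\varepsilon)$. The quadratic dependence in Klartag's bound comes from bookkeeping internal to the Minkowski-to-Steiner transfer, not from the place you locate it.
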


In \cite{K_Isomet-Mink+Stein} Klartag first provided a bound of
$Cn|\log \varepsilon |$ steps on the convergence rate when applying
the {\em Minkowski symmetrization} $M_u K$, a linear operation on the
support function, by means of controlling the decay of the
non-constant spherical harmonics of the support function. The proof
of Theorem \ref{thm-Klartag-Convergence} consists mainly of the bound
for Minkowski symmetrizations, together with the inclusion
$S_u K\subseteq M_u K$. A byproduct of this approximation is that the
bound for Steiner symmetrizations is polynomial in the dimension $n$
rather than linear. It is conjectured that the correct dependence is
indeed linear, as in the case of Minkowski symmetrizations. The goal
of this paper is to provide a direct estimate for the convergence
rate of Steiner symmetrizations in the Nikodym pseudo metric, defined
in Section \ref{Sec-Stability}.
It may be formulated as follows, where $A\Delta B$ is the {\em
symmetric difference} of the sets $A$ and $B$.
\begin{thm}\label{Thm_Main}
Let $K\in \kn$ be a convex body with $|K|=|D_n|$ and let $\varepsilon
\in(0,1)$. There exist $c\frac{n^{13}\log^3 n}{\varepsilon^\gamma}$
Steiner symmetrizations transforming $K$ into a body $K'$ satisfying
\[
\frac{|K' \Delta D_n|}{|D_n|} < \varepsilon,
\]
where $\gamma={4+\frac{2}{\log n}}$.
\end{thm}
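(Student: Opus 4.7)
\emph{Monovariant.} I propose to use the excess second moment
\[
\Phi(K) := \int_K |x|^2\,dx - \int_{D_n}|x|^2\,dx,
\]
which is nonnegative when $|K|=|D_n|$ and is controlled by the circumradius of $K$. A direct computation on each chord $K \cap (y+\R u)$, parametrised by its midpoint $m_{K,u}(y)$ and half-length $c_{K,u}(y)$, gives the stability identity
\[
\Phi(K) - \Phi(S_u K) = 2\int_{u^\perp} c_{K,u}(y)\, m_{K,u}(y)^2\,dy.
\]
Thus the decrease of $\Phi$ under a single Steiner symmetrization in direction $u$ equals the chord-length-weighted second moment of chord midpoints in that direction. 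I would record this identity first and use it as the bridge between the operation $K \mapsto S_u K$ and a geometric notion of directional asymmetry.

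\emph{Asymmetry vs.\ Nikodym distance.} The core of the proof is the modified stability lemma promised in Section~\ref{Sec-Stability}, which must supply, for every $K$ with $\delta(K) := |K \Delta D_n|/|D_n| \geq \varepsilon$, a direction $u \in S^{n-1}$ along which
\[
\int_{u^\perp} c_{K,u}(y)\, m_{K,u}(y)^2\,dy \geq \frac{1}{n^a}\,\varepsilon^\gamma
\]
for some explicit $a$ and $\gamma$. I would prove this by averaging over $u\in S^{n-1}$ against the uniform probability measure, comparing $m_{K,u}$ to the shift needed to centre each chord of $K$, and using a two-scale split --- a ``bulk'' regime where $K$ and $D_n$ differ by a macroscopic amount, versus a ``tail'' of logarithmic width --- to lower bound the spherical average by a power of $\delta(K)$. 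The optimisation of the scale separating these two regimes is what produces the anomalous exponent $\gamma = 4 + 2/\log n$.

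\emph{Iteration.} Given the lemma, the iteration is routine: as long as $\delta(K_k) > \varepsilon$, the selected direction $u_k$ yields $\Phi(K_k) - \Phi(K_{k+1}) \geq n^{-a}\,\varepsilon^\gamma$, so at most $\Phi(K_0)\, n^a/\varepsilon^\gamma$ steps occur before $\delta$ drops below $\varepsilon$. A preliminary stage --- applying a controlled number of Steiner symmetrizations to first reduce the circumradius of $K$ to $O(\sqrt{\log n})$, for instance via the inclusion $S_u K \subseteq M_u K$ and Klartag's Minkowski bound --- ensures that $\Phi(K_0)$ is polynomial in $n$ and absorbs the additional logarithmic factor into the final bound $cn^{13}\log^3 n/\varepsilon^\gamma$.

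\emph{Main obstacle.} The hard step is undoubtedly the averaging argument for the modified stability lemma. A naive spherical averaging loses several powers of $n$, and the Nikodym deficit $\delta(K)$ is a global quantity that need not concentrate along any single hyperplane; coupling it with the directional chord-midpoint functional requires a careful Riesz-type rearrangement estimate to avoid excessive dimensional loss. Everything else --- the chord identity, the preliminary reduction of the circumradius, and the discrete-gradient-descent bookkeeping --- is essentially elementary, and will determine only the multiplicative constants in the final bound.
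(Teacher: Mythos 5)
Your approach is genuinely different from the paper's: you propose the excess second moment $\Phi(K)=\int_K|x|^2-\int_{D_n}|x|^2$ as the monovariant, whereas the paper works with the surface area $|\partial K|$, tracks the surface-area deficit $\delta_u(K)=1-|\partial S_u K|/|\partial K|$ introduced by Barchiesi--Cagnetti--Fusco, and controls $A(K,S_uK)$ by $\delta_u(K)^{1/(2\lambda)}$ via a weighted Poincar\'e inequality for the fiber-barycenter function (Theorems~\ref{Thm-OurPoincare} and \ref{thm-SteinerStability}). Your chord identity $\Phi(K)-\Phi(S_uK)=2\int_{u^\perp} c\,m^2$ is correct and elementary, so the framework is sound in outline. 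The paper also needs an extra device you do not: since its stability inequality involves $A(K,S_uK)$ rather than $A(K,D_n)$, it uses Lemma~\ref{lem-CloseToReflections} (closeness to all reflections implies closeness to the ball); your formulation, being phrased directly in terms of $\delta(K)=|K\Delta D_n|/|D_n|$, would bypass that step --- \emph{if} your key lemma held.

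But that key lemma is precisely where the proposal has a genuine gap. You assert that $\delta(K)\ge\varepsilon$ forces some direction $u$ with $\int_{u^\perp}c_{K,u}\,m_{K,u}^2\ge n^{-a}\varepsilon^\gamma$, and you offer only the phrase ``two-scale split'' plus ``a careful Riesz-type rearrangement estimate'' in place of a proof. This is not a minor omission: the entire content of the theorem is concentrated here. Several concrete difficulties are visible. First, $\Phi$ is a soft functional and can be much smaller than any fixed power of $\delta(K)$ unless one proves a stability inequality $\Phi(K)\gtrsim\mathrm{poly}(n)^{-1}\delta(K)^{\gamma}$ first; your claimed one-step drop of $n^{-a}\varepsilon^\gamma$ cannot exceed $\Phi(K)$ itself, so this prerequisite must be established and you do not address it. Second, for an unconditional body --- and the first $n$ symmetrizations of the paper produce exactly that --- the integrand $c\,m^2$ vanishes identically in all $n$ coordinate directions; showing that the spherical average is nevertheless not too small requires an argument, not just ``averaging over $u$''. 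Third, the specific exponent $\gamma=4+2/\log n$ is produced in the paper by an explicit optimization of $\lambda$ in the Poincar\'e/H\"older step; in your sketch it is simply announced to be the outcome of an unspecified scale optimization, which reads as reverse-engineered from the statement rather than derived. Finally, your preliminary circumradius reduction to $O(\sqrt{\log n})$ via $S_uK\subseteq M_uK$ and Klartag's Minkowski bound reintroduces exactly the Minkowski detour the paper is at pains to avoid (it states its aim is a ``self contained, direct analysis of Steiner symmetrization''); the paper gets away with the weaker bound $R/r\le\sqrt n$ after $2n$ symmetrizations (unconditionality plus rounding the John ellipsoid, citing Klartag--Milman Lemma~2.6), and you would need to either do the same or actually carry out and cost your stronger reduction. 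As submitted, the iteration and bookkeeping are fine, but the lemma that drives them is missing.
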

Obviously, Theorem \ref{Thm_Main} provides a non optimal bound (for
example, by equivalence of the Hausdorff and Nikodym metrics, one can
derive a better bound from Theorem \ref{thm-Klartag-Convergence}).
However, the polynomial bound presented in this proof is obtained
using a self contained, direct analysis of Steiner symmetrization,
which may lead to similar results in the case of non convex sets,
where there are no estimates analogous to Theorem \ref{thm-Klartag-Convergence}.
The main ingredient of our proof is a quantitative estimate regarding
the change in surface area under a Steiner symmetrization. It is a
well known fact that surface area decreases under a Steiner
symmetrization. However, a quantitative version of this statement was
only recently provided, by Barchiesi, Cagnetti and Fusco \cite{BCF}.
Their statement contains factors which are exponential in the
dimension and have a direct effect on the estimate of the convergence
rate. In Section \ref{Sec-Stability} we provide a slightly different
version with an improved dependence on the dimension. To this end we
require a Poincar\'{e} type inequality for convex domains, which we
obtain in the following section.

\section{Poincar\'{e} type inequalities for convex domains}\label{Sec_Poincare}
We wish to establish a weighted Poincar\'{e} type inequality for
convex domains. We denote by $\rho: K\to \R^+$ the distance to the
boundary of $K$, that is
\[
\rho(x)=\min_{y\in \partial K} \{ |x-y| \}.
\]

Our main result in this section is the following theorem:
\begin{thm}\label{Thm-OurPoincare}
Let $n\ge 2$ and let $K \in \kn$ be such that $rD_n \subseteq K \subseteq
R D_n$. If $b: K \to \R$ is a bounded function with mean zero with
respect to $\rho$ (i.e. $\int_K b\rho=0$), then for every $\lambda\in
(2,\infty)$ one has
\[
\int_K |b| \le C 
\left(\frac{||b||_\infty|K|}{\beta} \right)^{1-\frac{1}{\lambda}}
\cdot \left(n\frac{R}{r}\int_K |\grad b|\rho\right)^{\frac{1}{\lambda}},
\]
where $\beta=\frac{\lambda-2}{\lambda-1}\in (0,1)$.
\end{thm}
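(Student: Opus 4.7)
The plan is to combine a H\"older decomposition of $\int_K |b|$ with an explicit bound on the boundary-singular integral $\int_K \rho^{-\alpha}$ and with the mean-zero condition $\int_K b\rho=0$ (applied in the form of a weighted $L^1$ Poincar\'e inequality). The exponent $\lambda$ and the factor $\beta=(\lambda-2)/(\lambda-1)$ will enter naturally through the H\"older exponent and the singular-integral estimate, respectively.

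First I would establish the geometric bound: for any $\alpha\in(0,1)$,
\[
\int_K \rho^{-\alpha}\ \le\ C\,\frac{n}{1-\alpha}\,r^{-\alpha}\,|K|.
\]
The inclusion $rD_n\subseteq K$ and convexity give $(1-t/r)K\subseteq K_{-t}:=\{x\in K:\rho(x)\ge t\}$, hence $|K\setminus K_{-t}|\le(nt/r)|K|$ for $t\le r$, and a layer-cake integration produces the stated estimate. The key point is that the choice $\alpha=1/(\lambda-1)$ gives $1-\alpha=\beta$, which is precisely the source of the $\beta^{-(1-1/\lambda)}$ factor in the theorem once this quantity is raised to the power $(\lambda-1)/\lambda$.

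Next I would apply H\"older's inequality in the form
\[
\int_K |b|\ =\ \int_K(|b|\rho^{1/\lambda})\rho^{-1/\lambda}\ \le\ \Bigl(\int_K |b|^\lambda\rho\Bigr)^{1/\lambda}\Bigl(\int_K\rho^{-1/(\lambda-1)}\Bigr)^{(\lambda-1)/\lambda}.
\]
The second factor is controlled by the first step. For the first factor, the trivial pointwise bound $|b|^\lambda\le\|b\|_\infty^{\lambda-1}|b|$ reduces matters to estimating $\int_K |b|\rho$, which is where the mean-zero hypothesis has to be used. My plan is to prove a weighted $L^1$ Poincar\'e inequality $\int_K |b|\rho\le C R\int_K |\nabla b|\rho$ by applying the needle decomposition of Payne--Weinberger / Gromov / Klartag to the mean-zero function $b\rho$: this produces a disintegration of $K$ into one-dimensional log-concave needles on each of which $\int b\rho\,d\mu=0$. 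On a single needle the inequality can be derived by direct computation: writing $b(x)-b(L/2)=\int_{L/2}^x b'$ and exchanging the order of integration by Fubini yields $\int|b|\rho\le (L/2)\int|b'|\rho$ on an interval of length $L\le 2R$. Aggregating the 1D bound back through the disintegration gives the $n$-dimensional inequality.

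The main obstacle is this last step. The weight $\rho$ measures distance to $\partial K$ rather than to the endpoints of a given needle, so the one-dimensional inequality naturally applies with a larger weight $\rho_I\ge\rho$ rather than with $\rho$ itself, and some care is needed to translate back. A geometric comparison of the two weights, using both the inner and outer ball bounds $rD_n\subseteq K\subseteq RD_n$, is what ultimately produces the precise $nR/r$ factor in the gradient term of the final estimate (as opposed to a weaker shape-dependent coefficient). Once the weighted Poincar\'e inequality has been established with the correct dependence on $n$, $R$ and $r$, combining it with the H\"older decomposition and the $\rho^{-\alpha}$ bound yields the theorem after a routine rearrangement of constants.
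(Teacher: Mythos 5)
Your overall architecture is exactly the paper's: a H\"older factorization of $\int_K|b|$ through $\int_K|b|\rho$ and $\int_K\rho^{-1/(\lambda-1)}$, a singular-integral bound near $\partial K$, and a weighted $L^1$ Poincar\'e inequality with the mean-zero condition. Your identification of $\alpha=1/(\lambda-1)$ and of $\beta=1-\alpha$ as the source of the $\beta^{-(1-1/\lambda)}$ factor is also correct, as is the reduction $|b|^\lambda\le\|b\|_\infty^{\lambda-1}|b|$. However there are two substantive problems.

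First, your bound on $\int_K\rho^{-\alpha}$ is too weak to give the stated theorem. The linearization $1-(1-t/r)^n\le nt/r$ yields $\int_K\rho^{-\alpha}\le C\frac{n}{1-\alpha}r^{-\alpha}|K|$, whereas the paper's Lemma~\ref{Lem_Bound-1-over-Rho} gives $C\frac{n^{\alpha}}{1-\alpha}r^{-\alpha}|K|$, with $n^\alpha$ rather than $n$. Tracing this through the H\"older step, your final estimate becomes
$\int_K|b|\le C\bigl(\tfrac{n\|b\|_\infty|K|}{\beta}\bigr)^{1-1/\lambda}\bigl(\tfrac{R}{r}\int_K|\grad b|\rho\bigr)^{1/\lambda}$,
which exceeds the claimed bound by the factor $n^{1-2/\lambda}>1$. (This loss happens to be only $O(1)$ for the particular choice $\lambda=2+\tfrac1{\log n}$ made in Section~4, so it would still serve the paper's downstream application, but it does not prove Theorem~\ref{Thm-OurPoincare} as stated.) Your layer-cake route can recover $n^\alpha$, but only if you evaluate $\int_0^1(1-u^n)(1-u)^{-\alpha-1}\,du$ exactly instead of linearizing; at that point you are essentially redoing the paper's Beta-function computation.

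Second, and more seriously, your plan for the weighted $L^1$ Poincar\'e inequality contains a circular step and an unproved reduction. Applying localization to $b\rho$ controls $\int_K|b\rho|$ by $\int_K|\grad(b\rho)|$-type quantities, and $|\grad(b\rho)|\le\rho|\grad b|+|b|\,|\grad\rho|\le\rho|\grad b|+|b|$; the term $\int_K|b|$ is precisely what you set out to bound, so the argument loops. The one-dimensional sketch $b(x)-b(L/2)=\int_{L/2}^x b'$ does not invoke the mean-zero hypothesis at all ($b$ has no reason to vanish at the midpoint; one would need to center at a zero of $b$, which exists by the intermediate value theorem, or absorb the constant $b(L/2)$ using $\int_I b\rho\,d\mu_I=0$), and the Fubini exchange produces the \emph{unweighted} $\int_I|b'|$ rather than $\int_I|b'|\rho$, which cannot be converted without a Hardy/Muckenhoupt-type argument since $\rho$ degenerates at $\partial K$. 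You flag the weight-translation difficulty yourself, but it is the crux, not a detail. The paper avoids all of this by citing Chua and Wheeden \cite{ChW}, whose Theorem~\ref{thm-PoincareChua} gives $\int_K|f|\rho\le C\,\mathrm{diam}(K)\int_K|\grad f|\rho$ directly when $\int_K f\rho=0$; that is the black box you should import rather than re-derive via needles.
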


We collect a few technical lemmas before proving Theorem \ref{Thm-OurPoincare}.
\begin{lem}\label{Lem_Switch-Vol-Surf.Area}
Let $n\ge 1$ and let $K\in\kn$ be such that $rD_n \subseteq K \subseteq
R D_n$. Then \[
\frac{n}{R} \le
\frac{|\partial K|}{|K|} \le
\frac{n}{r}
.\]
\end{lem}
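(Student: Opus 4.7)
My plan is to apply the divergence theorem to the radial vector field $F(x)=x$ on $K$. Since $\mathrm{div}(F)=n$ identically, this yields the classical identity
\[
n|K| = \int_{\partial K} \langle x,\nu(x)\rangle\, d\sigma(x),
\]
where $\nu(x)$ denotes the outer unit normal to $\partial K$. The lemma will then follow from two-sided pointwise bounds on the integrand $\langle x,\nu(x)\rangle$ for $x\in \partial K$, one coming from each inclusion.

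For the upper bound on $|\partial K|/|K|$, I will use the fact that at each $x\in \partial K$ the supporting hyperplane $H_x=\{y:\langle y,\nu(x)\rangle = \langle x,\nu(x)\rangle\}$ separates $K$ from the half-space $\{y:\langle y,\nu(x)\rangle > \langle x,\nu(x)\rangle\}$. Since $rD_n\subseteq K$ and the point $r\nu(x)$ lies in $rD_n$, we must have $r=\langle r\nu(x),\nu(x)\rangle \le \langle x,\nu(x)\rangle$. Integrating over $\partial K$ gives $r|\partial K|\le n|K|$, i.e.\ $|\partial K|/|K|\le n/r$.

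For the lower bound, the argument is even simpler: since $x\in K\subseteq RD_n$ we have $|x|\le R$, and Cauchy--Schwarz gives $\langle x,\nu(x)\rangle \le |x|\,|\nu(x)| \le R$. Integrating yields $n|K|\le R|\partial K|$, i.e.\ $|\partial K|/|K|\ge n/R$.

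The only technical point to address is justifying the divergence theorem for a general convex body, whose boundary need not be smooth. Since $\partial K$ is Lipschitz with $\nu$ defined $\mathcal{H}^{n-1}$-almost everywhere, this is standard; alternatively one approximates $K$ in the Hausdorff metric by smooth strictly convex bodies and passes to the limit using the continuity of both volume and surface area on $\kn$. I do not anticipate any serious obstacle in this step.
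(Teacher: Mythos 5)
Your argument is correct, but it takes a genuinely different route from the paper's. The paper invokes the Minkowski-content definition of surface area, $|\partial K|=\lim_{t\to 0}\frac{|K+tD_n|-|K|}{t}$, and replaces $D_n$ by the larger set $K/r$ (respectively the smaller set $K/R$), so that homogeneity of volume immediately yields $|\partial K|\le n|K|/r$ and $|\partial K|\ge n|K|/R$ — a one-line computation with essentially no regularity concerns. You instead start from the cone-volume (or divergence-theorem) identity $n|K|=\int_{\partial K}\langle x,\nu(x)\rangle\,d\sigma$ and derive pointwise two-sided bounds $r\le \langle x,\nu(x)\rangle\le R$ on $\partial K$: the lower bound via the fact that $r\nu(x)\in rD_n\subseteq K$ lies on the correct side of the supporting hyperplane at $x$, and the upper bound via Cauchy--Schwarz and $\partial K\subseteq RD_n$. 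Both arguments are sound; the paper's is slightly shorter and avoids any discussion of boundary regularity, while yours isolates the geometric content more explicitly (the support function of $K$ is pinched between $r$ and $R$ on $S^{n-1}$, which is exactly what $rD_n\subseteq K\subseteq RD_n$ says) and generalizes more readily to weighted or anisotropic perimeters. Your handling of the regularity issue — Lipschitz boundary with $\mathcal{H}^{n-1}$-a.e.\ defined normal, or approximation by smooth strictly convex bodies — is the standard and adequate justification.
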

\begin{proof}
By the definition we have: \[
|\partial K| =
\lim_{t\to 0} \frac{ |K + t D_n| - |K| }{t} \le
\lim_{t\to 0} \frac{ |K + \frac{t}{r} K| - |K| }{t} =
\frac{n |K|}{r}
.\]
Replacing $D_n$ with $K/R$ in the above limit yields the other direction.
\end{proof}

\begin{lem}\label{Lem_Bound-1-over-Rho}
Let $n\ge 1$, $K\in\kn$.
For every $\beta\in (0,1)$ we have \[
I_{\beta} = \int_K \frac{1}{\rho^{1-\beta}} <
\frac{C n^{1-\beta}|K|}{\beta r^{1-\beta}},\]
where $r$ is the inner radius of $K$, and $C$ is some positive constant.
\end{lem}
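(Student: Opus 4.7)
The plan is to convert $I_\beta$ into a one-dimensional integral over level sets of $\rho$ via the layer-cake formula. Since $\{\rho^{-(1-\beta)} > t\} = \{\rho < t^{-1/(1-\beta)}\}$, the substitution $u = t^{-1/(1-\beta)}$ gives
\[
I_\beta = (1-\beta)\int_0^\infty \bigl|\{x \in K : \rho(x) < u\}\bigr|\, u^{-(2-\beta)}\, du.
\]
The problem thus reduces to estimating the volume of the boundary strip $\{\rho < u\} = K \setminus K_{-u}$, where $K_{-u}$ denotes the inner parallel body at distance $u$.

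The key geometric input is the standard convex-geometry inequality $|K \setminus K_{-u}| \leq u\,|\partial K|$, which follows from the coarea formula together with the monotonicity $|\partial K_{-s}| \leq |\partial K|$ of inner parallel surface areas (a consequence of Brunn--Minkowski). Combining this with the trivial estimate $|\{\rho < u\}| \leq |K|$ and Lemma \ref{Lem_Switch-Vol-Surf.Area}, which yields $|\partial K| \leq n|K|/r$, one obtains
\[
|\{\rho < u\}| \leq \min\!\left(|K|,\; \frac{n|K|}{r}\, u\right).
\]
The two bounds coincide at the threshold $u_0 = r/n$, which suggests splitting the integral at $u_0$ and applying the linear bound for $u < u_0$ and the constant bound for $u > u_0$.

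Each piece then reduces to an elementary power integral. The low-$u$ part contributes $(1-\beta)\cdot \tfrac{n|K|}{r}\cdot \tfrac{u_0^\beta}{\beta} = \tfrac{(1-\beta) n^{1-\beta}|K|}{\beta\, r^{1-\beta}}$, and the high-$u$ part contributes $(1-\beta)|K|\cdot \tfrac{u_0^{-(1-\beta)}}{1-\beta} = \tfrac{n^{1-\beta}|K|}{r^{1-\beta}}$. Summing and using $\tfrac{1-\beta}{\beta}+1=\tfrac{1}{\beta}$ gives exactly $\tfrac{n^{1-\beta}|K|}{\beta\, r^{1-\beta}}$, matching the claim (with $C = 1$). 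I anticipate no real obstacle here: the only non-elementary ingredient is the boundary-strip inequality, which I would simply cite as a standard fact, and everything else is layer-cake bookkeeping together with the inradius--surface-area bound already supplied by Lemma \ref{Lem_Switch-Vol-Surf.Area}. The hypothesis $K \subseteq RD_n$ plays no role in this estimate.
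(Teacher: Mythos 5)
Your argument is correct but takes a genuinely different route from the paper's. The paper compares $\rho$ to the Minkowski-functional distance $\rho_K(x)=1-\|x\|_K$ via $\rho\ge r\rho_K$, then computes $\int_K\rho_K^{\beta-1}$ \emph{exactly} as $n|K|B(n,\beta)$ by layer-cake and a change of variables, and finally bounds the Beta function through Gamma-function asymptotics ($\Gamma(n)n^{1+\beta}<C\,\Gamma(1+n+\beta)$). You instead apply layer-cake directly to $\rho$, use the two-sided bound $|\{\rho<u\}|\le\min\bigl(|K|,\,u\,|\partial K|\bigr)$ on the boundary strip, invoke Lemma~\ref{Lem_Switch-Vol-Surf.Area} for $|\partial K|\le n|K|/r$, and split the one-dimensional integral at the crossover $u_0=r/n$. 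Your geometric ingredient is the coarea-plus-monotonicity fact $|K\setminus K_{-u}|=\int_0^u|\partial K_{-s}|\,ds\le u\,|\partial K|$ (valid for $u<r$, which suffices since $u_0\le r$), as opposed to the paper's reliance on the pointwise comparison $\rho\ge r\rho_K$. What your route buys is elementarity and an explicit constant $C=1$: no Beta or Gamma identities are needed, only a split power integral. What the paper's route buys is an exact closed form for the comparison integral, which is tidier if one wants the sharp asymptotics in $\beta$ or $n$. Both arguments correctly use only the inradius (the hypothesis $K\subseteq RD_n$ is indeed idle here, in your proof and in the paper's). One small point worth making explicit if you write this up: the identification $\{\rho=s\}=\partial K_{-s}$ and the monotonicity $|\partial K_{-s}|\le|\partial K|$ both require $s<r$ so that $K_{-s}$ is a full-dimensional convex body; since you only apply the linear bound on $(0,u_0)$ with $u_0=r/n\le r$, this is satisfied, but it deserves a sentence.
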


\begin{proof}
First, recall the {\em Beta function} defined for positive $x$ and $y$ by \[
B(x,y) = \int_0^1 t^{x-1}(1-t)^{y-1}dt.\]
The function $\rho$ is bounded (from above and below) by the $K$-distance-to-the-boundary function $\rho_K(x)=\min_{y\in \partial K}
\{ ||x-y||_K \}$ $= 1 - ||x||_K$, whose corresponding integral is easily
estimated. Indeed, if $rD_n \subseteq K \subseteq RD_n$ then
\begin{equation*}
\frac{1}{R}|x| \le ||x||_K \le \frac{1}{r}|x|.
\end{equation*}
In particular we get a lower bound on $\rho$:
\begin{equation}\label{eq-rho_bound}
\rho(x) = \min_{y\in \partial K}|x-y| \ge r \min_{y\in\partial K}||x-y||_K = r \rho_K(x) .
\end{equation}
By Fubini's theorem:
\begin{eqnarray*}
\int_K \frac{1}{\rho_K(x)^{1-\beta}}
&=&
\int_0^\infty \left|\left\{x \in K : \rho_K^{-(1-\beta)}(x) > t\right\}\right|dt \\ \\
&=&
\int_0^1 |K| dt + \int_1^\infty \left|\left\{x \in K : \rho_K^{-(1-\beta)}(x) > t\right\}\right|dt \\ \\
&=&
|K|\left(1 + \int_1^\infty\left(1 - (1-t^{-\frac{1}{1-\beta}})^n \right)  dt\right) \\ \\
&=&
|K|\left(1 + \int_0^1 (1-s^n)(1-s)^{\beta-2}(1-\beta)ds\right) \\ \\
&=&
n|K|\int_0^1(1-s)^{\beta-1}s^{n-1}ds = n|K|B(n,\beta) \\ \\
&=&
n|K|\frac{n+\beta}{\beta}B(n, 1+\beta) \\ \\
&=&
n|K|\left(\frac{n+\beta}{\beta}\right)\frac{\Gamma(n)\Gamma(1+\beta)}{\Gamma(1+n+\beta)} \\ \\
&<&\frac{2 n^2 |K| \Gamma(n)}{\beta \Gamma(1+n+\beta)}
<
\frac{C n^{1-\beta} |K|}{\beta},
\end{eqnarray*}
for some $C > 0$, since $\Gamma(n)n^{1+\beta}< C_1\Gamma(1+n+\beta)$.
Therefore, by \eqref{eq-rho_bound} we conclude that
\begin{eqnarray*}
I_{\beta} &=& \int_K \frac{1}{\rho^{1-\beta}} \leq \frac{1}{r^{1-\beta}}\int_K \frac{1}{\rho_K(x)^{1-\beta}}
< \frac{C n^{1-\beta}|K|}{\beta r^{1-\beta}}.
\end{eqnarray*}
\end{proof}

The last tool we require is the following weighted Poincar\'{e} type
inequality, due to Chua and Wheeden (in fact, in \cite{ChW} they
prove a more general result).
\begin{thm}[Chua, Wheeden]\label{thm-PoincareChua}
Let $K \in \kn$ and let $f$ be a Lipschitz function. If
\[
\int_K f\rho = 0,
\]
then
\[
\int_K |f|\rho  \leq C \text{diam}(K) \int_K |\grad f|\rho,
\]
where $C>0$ is some universal constant.
\end{thm}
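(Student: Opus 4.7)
\textbf{Proof proposal for Theorem \ref{thm-PoincareChua}.}
The plan is to reduce the weighted inequality to a family of local unweighted inequalities via a Whitney decomposition of $K$, and then aggregate them using a chaining argument that exploits the fact that every convex body is a John domain. First, partition $K$ (up to a null set) into a countable disjoint collection of dyadic cubes $\{Q_j\}$ satisfying the Whitney property $\text{diam}(Q_j) \asymp \text{dist}(Q_j, \partial K)$. On each $Q_j$ the weight $\rho$ is therefore comparable to the constant $\ell_j := \text{diam}(Q_j)$, so the classical (unweighted) Poincar\'e inequality on a cube, with the weight treated as locally constant, yields
\[
\int_{Q_j} |f - f_{Q_j}|\rho \le C \ell_j \int_{Q_j} |\nabla f|\rho,
\]
where $f_{Q_j}$ denotes the ordinary mean of $f$ over $Q_j$.

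The second step is to pass from these local bounds to a global one. Fix a ``central'' cube $Q_0$ with $\text{dist}(Q_0,\partial K)$ of order the inradius (this is where convexity enters: it makes $K$ a John domain with center near the incenter), and for each $j$ build a chain $Q_j = Q^{(0)}, Q^{(1)}, \ldots, Q^{(k_j)} = Q_0$ of touching Whitney cubes whose diameters grow geometrically along the chain. Telescoping $f_{Q_j} - f_{Q_0}$ along the chain and applying a Poincar\'e estimate across each pair of adjacent cubes bounds the weighted oscillation of $f$ relative to $f_{Q_0}$ by a sum of gradient integrals. A Bojarski-type rearrangement of the resulting double sum, relying on the combinatorial fact that each Whitney cube belongs to only a controlled number of chains, yields
\[
\int_K |f - f_{Q_0}|\rho \le C\, \text{diam}(K) \int_K |\nabla f|\rho.
\]
Since $\int_K f\rho = 0$ by hypothesis, $|f_{Q_0}|\int_K\rho \le \int_K|f-f_{Q_0}|\rho$, so the triangle inequality removes $f_{Q_0}$ at the cost of a factor of two.

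The main obstacle is the aggregation in Step 2: a naive chain argument produces a constant depending on the John constant of $K$, which could in principle scale like $R/r$ rather than $\text{diam}(K)$ alone. The resolution is that the weight $\rho$ discounts contributions from cubes near $\partial K$ --- precisely the cubes responsible for long chains --- so small $\ell_j$ appear together with correspondingly small weights and the geometric series telescopes cleanly. Making this quantitative requires carefully tracking the $\rho$-weighted multiplicity with which each Whitney cube is traversed by the family of chains; this bookkeeping is the technical heart of the Chua--Wheeden argument and is what replaces the logarithmic chain length by the single factor $\text{diam}(K)$ in the final estimate.
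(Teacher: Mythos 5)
Note first that the paper does not prove Theorem \ref{thm-PoincareChua} at all: it is quoted as a known result of Chua and Wheeden \cite{ChW}, with the parenthetical remark that the reference proves something more general, and then used as a black box in the proof of Theorem \ref{Thm-OurPoincare}. There is therefore no in-paper argument to compare your sketch against.

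As for the sketch itself, it is a plausible high-level outline of a Whitney/Boman chain argument for weighted Poincar\'e inequalities on John domains, and you correctly isolate the genuine difficulty: a naive chaining bound yields a constant proportional to the John constant of $K$, which for a convex body can be of order $R/r$, whereas the claimed bound involves only $\mathrm{diam}(K)$. But you explicitly defer the resolution of exactly this point (``this bookkeeping is the technical heart of the Chua--Wheeden argument''), so what you have written is not a proof, it is a pointer to one; the step that would actually justify the factor $\mathrm{diam}(K)$ is missing. If you want a self-contained argument, you must make the $\rho$-weighted multiplicity estimate precise, or else sidestep Whitney decompositions entirely: for convex domains one can use a segment/slicing representation of $f(x)-f(y)$ along chords of $K$ in the spirit of Payne and Weinberger, and estimate the resulting kernel directly against $\rho$; this is closer to the route Chua and Wheeden actually take in the convex case, and it produces $\mathrm{diam}(K)$ naturally because every chord has length at most $\mathrm{diam}(K)$.
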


We turn now to prove the main result of this section.
\begin{proof}[{\bf Proof of Theorem \ref{Thm-OurPoincare}.}]
Let $\lambda > 2$. By the H\"{o}lder inequality we have
\begin{equation} \label{eq-Reverse_Holder}
\int_K|b|^{\frac{1}{\lambda}}\le
\left(\int_K |b|\rho\right)^{\frac{1}{\lambda}} \cdot
\left(\int_K \rho^{\frac{1}{1-\lambda}}\right)^{1-\frac{1}{\lambda}}
%
\end{equation}
We write $\lambda = \frac{2-\beta}{1-\beta}$ for $\beta \in(0,1)$, so
that $(\lambda-1)(1-\beta)=1$. By Lemma \ref{Lem_Bound-1-over-Rho},
\begin{eqnarray*}
\left(\int_K \rho^{\frac{1}{1-\lambda}}\right)^{1-\frac{1}{\lambda}} &=& \left(\int_K \frac{1}{\rho^{1-\beta}}\right)^{1-\frac{1}{\lambda}} <
\left(\frac{C n^{1-\beta}|K|}{\beta r^{1-\beta}}\right)^{1-\frac{1}{\lambda}} \\ \\
&=& \left(\frac{n}{r}\right)^{\frac{1}{\lambda}}\cdot
\left(\frac{C|K|}{\beta}
\right)^{1-\frac{1}{\lambda}}
\end{eqnarray*}
Combining the two estimates we get
\[
\int_K |b| \le
||b||_\infty^{1-\frac{1}{\lambda}}\int_K|b|^{\frac{1}{\lambda}}\le
\left(\frac{C||b||_\infty|K|}{\beta}  \right)^{1-\frac{1}{\lambda}}
\left(\frac{n}{r}\int_K |b|\rho\right)^{\frac{1}{\lambda}}.
\]
Since $\int_K b\rho = 0$, and $diam(K) \le 2R$, we may apply Theorem \ref{thm-PoincareChua} to obtain:
\[
\int_K |b| \le
\left(\frac{C||b||_\infty|K|}{\beta}  \right)^{1-\frac{1}{\lambda}}
\left(\frac{nR}{r}\int_K |\grad b|\rho\right)^{\frac{1}{\lambda}}.
\]
\end{proof}

\section{Stability for the Steiner symmetrization}\label{Sec-Stability}
Let $K \in \kn$ and $u \in S^{n-1}$. It is well known that the surface
area $|\partial K|$ decreases under a Steiner symmetrization, but until
recently this phenomenon was not quantified. Barchiesi, Cagnetti and
Fusco showed in \cite{BCF} that for a convex body $K$ satisfying
 $rD_n \subseteq K \subseteq RD_n$, the following holds
\begin{equation} \label{eq-FuscoStability}
A(K, S_u K) \leq n4^{n+1}\left(\frac{R}{r}\right)^{2n}\sqrt{\delta_u(K)},
\end{equation}
where the {\em surface area deficit} $\delta_u$ is defined by
\[
\delta_u(K) = 1 - \frac{|\partial S_u K|}{|\partial K|},
\]
and the {\em Nikodym pseudo metric} $A$ is defined by
\[
A(K, T) = \inf_{x_0 \in \R^n}\frac{|(rK) \Delta (x_0 + T))|}{|T|},
\]
for  $r^n=\frac{|T|}{|K|}$.
In this section we show that the dependence on the dimension and the
quantity $\frac{R}{r}$ in (\ref{eq-FuscoStability}) can be reduced to
polynomial at the cost of slightly worsening the exponent of
$\delta_u(K)$ (i.e. decreasing it below $1/2$). More precisely:

\begin{thm}\label{thm-SteinerStability}
Let $n\ge 2$ and let $K \in \kn$ such that $rD_n \subseteq K \subseteq
R D_n$. Then for every $\lambda\in (2,\infty)$ and $u\in S^{n-1}$ we have:
\[ A(K,S_u K)\le  C
\left(\frac{1}{\beta}  \right)^{1-\frac{1}{\lambda}}
\left( n\frac{R}{r} \right)^{1+\frac{1}{\lambda}}
\delta_u(K)^{\frac{1}{2\lambda}},
\]
where $\beta=\frac{\lambda-2}{\lambda-1}\in (0,1)$, and $C$ is some constant.
\end{thm}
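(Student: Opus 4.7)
The plan is to reduce $A(K,S_u K)$ to an integral of a one-dimensional shift function on $\pi(K)$, and then combine Theorem~\ref{Thm-OurPoincare} with a quantitative version of the surface area inequality under Steiner symmetrization.

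Decompose $K$ by its chords in direction $u$: for $y\in\pi(K)$ write $K\cap(y+\R u)=\{y+tu:\alpha(y)\le t\le\beta(y)\}$, with $\alpha$ convex, $\beta$ concave, and $\ell:=\beta-\alpha\ge 0$ concave vanishing on $\partial\pi(K)$. The chord midpoint $b:=(\alpha+\beta)/2$ satisfies $|(K+su)\,\Delta\, S_u K|=2\int_{\pi(K)}|b+s|\,dy$ by Fubini; choosing $\tilde b:=b-\bar b$ with $\bar b:=(\int b\rho)/(\int\rho)$ so that $\int\tilde b\rho=0$, one obtains $A(K,S_u K)\le(2/|K|)\int_{\pi(K)}|\tilde b|\,dy$. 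Applying Theorem~\ref{Thm-OurPoincare} in dimension $n-1$ to $\tilde b$ on the convex body $\pi(K)\subseteq H$ (which inherits $rD_{n-1}\subseteq\pi(K)\subseteq RD_{n-1}$ from $K$), together with $\|\tilde b\|_\infty\le 2R$, yields
\[
\int_{\pi(K)}|\tilde b|\,dy\le C\Bigl(\frac{R\,|\pi(K)|}{\beta}\Bigr)^{1-\frac{1}{\lambda}}\Bigl(\frac{nR}{r}\int_{\pi(K)}|\grad b|\rho\,dy\Bigr)^{\frac{1}{\lambda}}.
\]

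The crux is to control $\int_{\pi(K)}|\grad b|\rho$ in terms of $\delta_u(K)^{1/2}$. Writing $\partial K$ and $\partial S_u K$ as graphs over $\pi(K)$ gives $|\partial K|-|\partial S_u K|=\int_{\pi(K)}[F(v+a)+F(v-a)-2F(a)]\,dy$, where $F(w):=\sqrt{1+|w|^2}$, $v:=\grad b$, and $a:=\grad\ell/2$, since $\grad\beta=v+a$ and $\grad\alpha=v-a$. The Hessian lower bound $\Hess F(w)\succeq(1+|w|^2)^{-3/2}\Id$ yields, by Taylor expansion, the pointwise estimate $F(v+a)+F(v-a)-2F(a)\ge|v|^2(1+(|v|+|a|)^2)^{-3/2}$. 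The concavity of $\beta,-\alpha,\ell$ (with $|\alpha|,|\beta|\le R$ and $\ell|_{\partial\pi(K)}=0$) implies the pointwise gradient bounds $|v|,|a|\le CR/\rho$, so that the deficit integrand is at least $c|v|^2\rho^3/R^3$; integrating and using Lemma~\ref{Lem_Switch-Vol-Surf.Area},
\[
\int_{\pi(K)}|\grad b|^2\rho^3\,dy\le CR^3\delta_u(K)|\partial K|\le\frac{CR^3 n|K|}{r}\delta_u(K).
\]
A Cauchy--Schwarz splitting with weights $|\grad b|\rho=(|\grad b|\rho^{(3-\beta)/2})\cdot\rho^{-(1-\beta)/2}$, using Lemma~\ref{Lem_Bound-1-over-Rho} to control $(\int\rho^{-(1-\beta)})^{1/2}$ and a H\"older interpolation between the pointwise bound $|\grad b|\rho\le 2R$ and the integral bound just proved for the first factor, yields a bound of the form $\int_{\pi(K)}|\grad b|\rho\,dy\lesssim(\text{dimensional factors})\cdot\delta_u(K)^{1/2}$.

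Substituting this into the Poincar\'e estimate and tracking constants delivers the theorem: the factor $\delta_u^{1/2}$, raised to the Poincar\'e exponent $1/\lambda$, produces $\delta_u(K)^{1/(2\lambda)}$, while the various factors of $n,R,r,\beta$ collapse into $(nR/r)^{1+1/\lambda}(1/\beta)^{1-1/\lambda}$. The main technical obstacle is the H\"older interpolation in the preceding paragraph: one must carefully balance the pointwise bound on $|\grad b|$ against the integral bound on $\int|\grad b|^2\rho^3$, choosing the H\"older parameter precisely so that it matches the $\beta=(\lambda-2)/(\lambda-1)$ of both Theorem~\ref{Thm-OurPoincare} and Lemma~\ref{Lem_Bound-1-over-Rho}, after which the dimensional factors recombine as claimed.
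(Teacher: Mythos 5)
Your overall structure is right and closely mirrors the paper up to the crucial step: you use the same chord decomposition with $b$ the fiber barycenter, the same reduction $A(K,S_uK)\le\frac{1}{|K|}\int_{\pi(K)}|\tilde b|$, the same application of Theorem~\ref{Thm-OurPoincare}, and the same pointwise gradient bounds from concavity (Lemma~\ref{Lem_Bound_grad_L} applied to $\beta$ and $-\alpha$). The Hessian/Taylor lower bound on the deficit integrand is also fine. But your strategy for controlling $\int_{\pi(K)}|\grad b|\rho$ has a genuine gap that costs you the exponent.

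You lower-bound the integrand $F(v+a)+F(v-a)-2F(a)$ pointwise by $c\,|v|^2\rho^3/R^3$ and integrate, obtaining control on $J:=\int|\grad b|^2\rho^3$ in terms of $\delta_u(K)$. That is strictly weaker than what the theorem requires. You then want $I:=\int|\grad b|\rho\lesssim J^{1/2}\cdot(\text{geometry})$. But the Cauchy--Schwarz split $I=\int(|\grad b|\rho^{3/2})\cdot\rho^{-1/2}\le J^{1/2}\bigl(\int\rho^{-1}\bigr)^{1/2}$ fails because $\int_{\pi(K)}\rho^{-1}$ diverges (the sublevel set $\{\rho<t\}$ has measure $\asymp t$, so the integral diverges logarithmically). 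Shifting to $\rho^{-(1-\beta)/2}$ forces the conjugate factor to be $\int|\grad b|^2\rho^{3-\beta}$, and any H\"older interpolation of this against $J=\int|\grad b|^2\rho^3$ and the pointwise bound $|\grad b|\rho\le 2R$ replaces $J$ by $J^{1-\theta}$ for some $\theta>0$ that depends on $\beta$: concretely, writing $|\grad b|^2\rho^{3-\beta}\le(|\grad b|^2\rho^3)^{1-\beta}(2R)^{2\beta}$ and applying H\"older gives $\int|\grad b|^2\rho^{3-\beta}\lesssim R^{2\beta}|\pi(K)|^{\beta}J^{1-\beta}$, hence $I\lesssim J^{(1-\beta)/2}$, i.e.\ $I\lesssim\delta_u^{(1-\beta)/2}$. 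Passing through the Poincar\'e step (raising to power $1/\lambda$) then yields $\delta_u^{(1-\beta)/(2\lambda)}=\delta_u^{1/(2\lambda(\lambda-1))}$, strictly worse than the stated $\delta_u^{1/(2\lambda)}$. So your ``H\"older interpolation yields \dots $\delta_u^{1/2}$'' cannot be made to work; the loss is structural, not just bookkeeping.

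The paper circumvents this entirely with a reverse Cauchy--Schwarz trick that you are missing. Instead of bounding the integrand pointwise, it factors $F(v+a)+F(v-a)-2F(a)=N/D$ (with $N$ and $D$ chosen so that $\int_P D\le|\partial K|$), and then uses
\[
|\partial K|-|\partial S_uK|=\int_P\frac{N}{D}\ \ge\ \frac{\bigl(\int_P\sqrt{N}\bigr)^2}{\int_P D},
\]
which is Cauchy--Schwarz applied to $\sqrt{N}=\sqrt{N/D}\cdot\sqrt{D}$. Bounding $\sqrt{N}\ge|\grad b|\rho/R$ (a \emph{linear} bound in $|\grad b|$, unlike your quadratic one) gives $\int_P|\grad b|\rho\le R\,|\partial K|\sqrt{\delta_u(K)}$ directly, preserving the $1/2$ exponent without any interpolation against $\rho^{-1}$. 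To repair your proof you should adopt this factorization rather than the pointwise Taylor bound; once you have $\int_P|\grad b|\rho\lesssim R\,|\partial K|\sqrt{\delta_u(K)}$, the rest of your argument (Poincar\'e step, $\|\tilde b\|_\infty\le 2R$, $2|P|\le|\partial K|$, Lemma~\ref{Lem_Switch-Vol-Surf.Area}) goes through and delivers the theorem as stated.
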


The proof of Theorem \ref{thm-SteinerStability} follows the methods of
\cite{BCF}, combined with Theorem \ref{Thm-OurPoincare}. Denote the
orthogonal projection of $K$ to $u^\perp$ by $P = Proj_{u^\perp}(K)$.
For each $x \in P$, we consider the ``fiber above $x$ in $K$'', namely
$K \cap (x+\R u)$. We denote its length by $L(x) = |K \cap (x+\R u)|$
and its barycenter by $b(x)$.
By Brunn's principle, $L$ is concave. The following lemma gives a
local upper bound for the gradient of a concave function, in terms of
the distance from the boundary of its domain.

\begin{lem}\label{Lem_Bound_grad_L}
Let $P \in \kn$, and denote by $\rho(x)=dist(x, \partial P)$ the
distance to $\partial P$. If $L:P\to\R$ is a concave function with
oscillation $\Delta L:=\sup\{L\}-\inf\{L\}$, then
\[
|\grad L(y)| \leq \frac{\Delta L}{\rho(y)}.
\]
\end{lem}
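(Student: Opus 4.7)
The plan is to use the defining first-order inequality for a concave function together with the fact that the closed ball of radius $\rho(y)$ around $y$ is contained in $P$. First I would reduce to the case where $L$ is differentiable at $y$; concavity ensures differentiability almost everywhere, and at any interior non-differentiable point the argument goes through with any element of the superdifferential in place of $\nabla L(y)$, since it controls the one-sided directional derivatives.

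Next, assuming differentiability, set $v=\nabla L(y)/|\nabla L(y)|$ (if $\nabla L(y)=0$ there is nothing to prove). Concavity gives the upper tangent bound
\[
L(y+sw)\le L(y)+s\,\langle\nabla L(y),w\rangle
\]
for every $w\in\R^n$ and every $s\ge 0$ with $y+sw\in P$. Applying this with $w=-v$ yields
\[
L(y-sv)\le L(y)-s\,|\nabla L(y)|.
\]
Since $\rho(y)=\mathrm{dist}(y,\partial P)$, the closed ball of radius $\rho(y)$ around $y$ lies in $P$, so the choice $s=\rho(y)$ is admissible and gives a point $y-\rho(y)v\in P$.

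Finally, I would bound $L(y-\rho(y)v)\ge\inf_P L$ and rearrange:
\[
\rho(y)\,|\nabla L(y)|\le L(y)-L(y-\rho(y)v)\le \sup_P L-\inf_P L=\Delta L,
\]
which is the claim after dividing by $\rho(y)$. There is no real obstacle here beyond the minor regularity remark above; the core of the argument is the one-line consequence of concavity evaluated in the direction of steepest descent, with the admissible step size dictated precisely by the distance to $\partial P$.
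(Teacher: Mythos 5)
Your proof is correct and is essentially the same as the paper's: both pick the point $x = y - \rho(y)v$ with $v = \nabla L(y)/|\nabla L(y)|$, observe it lies in $P$, and apply concavity to conclude $\rho(y)|\nabla L(y)| \le L(y) - L(x) \le \Delta L$. The only cosmetic difference is that you invoke the supergradient (tangent-hyperplane) inequality directly, whereas the paper integrates the monotone directional derivative along the segment from $x$ to $y$ — two equivalent expressions of the same first-order concavity bound.
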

\begin{proof}
Let $y\in P$, and consider $x = y - \rho(y)v \in P$, where
$v = \frac{\grad L(y)}{|\grad L(y)|}$. Then
\[
L(y) - L(x) =
\int_0^{\rho(y)} \frac{\partial L}{\partial v}(x + tv)dt \ge
\int_0^{\rho(y)}\frac{\partial L}{\partial v}(y)dt =
\rho(y)|\grad L(y)|,
\]
by concavity. Therefore $|\grad L(y)|\le \frac{\Delta L}{\rho(y)}$, as required.
\end{proof}

\begin{proof}[{\bf Proof of Theorem \ref{thm-SteinerStability}.}]
As before, denote $P = Proj_{u^\perp}(K)= Proj_{u^\perp}(S_u K)$ and
$\rho(x) = dist(x, \partial P)$. The fiber $K \cap (x+\R u)$ has endpoints with heights $b\pm \frac{L}{2}$, thus:
\begin{eqnarray*}
|\partial K| - |\partial S_u K| &=&
\int_P \left(\sqrt{1+\left|\grad b + \frac{\grad L}{2}\right|^2} + \sqrt{1+\left|\grad b - \frac{\grad L}{2}\right|^2} - 2\sqrt{1 + \left|\frac{\grad L}{2}\right|^2} \right) \\ \\
&=& \int_P \frac{N}{D}\ge
\left(\int_P N^\frac{1}{2}\right)^2
\left(\int_P D\right)^{-1},
\end{eqnarray*}
where
\begin{eqnarray*}
N &=&
\frac{1}{2}\left(\sqrt{1+\left|\grad b + \frac{\grad L}{2}\right|^2} + \sqrt{1+\left|\grad b - \frac{\grad L}{2}\right|^2}\right)^2 -
\frac{1}{2}\left(2\sqrt{1 + \left|\frac{\grad L}{2}\right|^2} \right)^2
\\ \\ &=&
\sqrt{\left(1 + \frac{1}{4}|\grad L|^2 + |\grad b|^2 \right)^2 - \iprod{\grad L}{\grad b}^2} - \left(1 + \frac{1}{4}|\grad L|^2 - |\grad b|^2\right)
\\ \\ &=&
\frac{ 4\left(1+\frac{1}{4}|\grad L|^2 \right)|\grad b|^2 - \iprod{\grad b}{\grad L}^2 } {\sqrt{\left(1 + \frac{1}{4}|\grad L|^2 + |\grad b|^2 \right)^2 - \iprod{\grad L}{\grad b}^2} + \left(1 + \frac{1}{4}|\grad L|^2 - |\grad b|^2\right) } \\ \\
&\geq& \frac{4|\grad b|^2}{\sqrt{\left(1 + \frac{1}{4}|\grad L|^2 + |\grad b|^2 \right)^2 - \iprod{\grad L}{\grad b}^2} + \left(1 + \frac{1}{4}|\grad L|^2 - |\grad b|^2\right)}
\equiv \frac{4|\grad b|^2}{Q},
\end{eqnarray*}
and
\[
D = \frac{1}{2}\left(
\sqrt{1+\left|\grad b + \frac{1}{2}\grad L\right|^2} + \sqrt{1+\left|\grad b - \frac{1}{2}\grad L\right|^2}
\right)
 + \sqrt{1 + \frac{1}{4}|\grad L|^2},
\]
so that $\int_P D = \frac{|\partial K| + |\partial S_u K|}{2} \le
|\partial K|$. Thus:
\begin{equation}\label{eq-surface_deficit_est}
\delta_u(K) = \frac{|\partial K| - |\partial S_u K|}{|\partial K|}\ge
\left(\frac{1}{|\partial K|} \int_P \sqrt{N}\right)^2.
\end{equation}
Next, we bound $N$ from below. Since $\sqrt{a^2-x^2} \leq a-\frac{x^2}{2a}$ we have

\begin{eqnarray*}
Q&=&\sqrt{\left(1 + \frac{1}{4}|\grad L|^2 + |\grad b|^2 \right)^2 - \iprod{\grad L}{\grad b}^2} + \left(1 + \frac{1}{4}|\grad L|^2 - |\grad b|^2\right) \\ \\
&\leq & 2 + \frac{1}{2}|\grad L|^2 - \frac{\iprod{\grad b}{\grad L}^2}{2 + \frac{1}{2}|\grad L|^2 + 2|\grad b|^2} \leq 2 + \frac{1}{2}|\grad L|^2 \\ \\
&\leq& 2\frac{R^2}{\rho^2} + \frac{1}{2}|\grad L|^2 \leq \frac{4R^2}{\rho^2},
\end{eqnarray*}
where the last inequality is due to Lemma \ref{Lem_Bound_grad_L} (here $\Delta L\le 2 R$). Therefore
\[
\sqrt{N} \ge \frac{|\grad b|\rho}{R}.
\]
Plugging this back to \eqref{eq-surface_deficit_est}, we may bound the surface area deficit:
\begin{equation}\label{Eq_Surface-Deficit}
\sqrt{\delta_u(K)} \ge
\frac{1}{R|\partial K|} \int_P |\grad b| \rho.
\end{equation}

In order to bound the Nikodym pseudo metric by the integral of the
barycenter, we first note that since $K\subseteq R D_n$, we have
$|b|\le R$. Moreover, $K$ may be shifted parallel to $u$, so
without loss of generality we may assume $\int_P b\rho=0$. This shift
cannot exceed $R$, thus the new barycenter is bounded by $2R$. We have:
\begin{equation}\label{Eq_Volume-Deficit}
A(K, S_u K) \le \frac{|K \Delta S_u K|}{|K|} \le \frac{1}{|K|}\int_P |b|,
\end{equation}
where the second inequality is due to the fact that
$| [-\frac{L}{2},\frac{L}{2}] \Delta
[b-\frac{L}{2},b+\frac{L}{2}] |\le |b|$. Since we assumed that
$\int_P b\rho=0$, we may apply Theorem \ref{Thm-OurPoincare} to get
(recall $||b||_\infty\le 2R$):
\begin{eqnarray*}
A(K, S_u K) &\le&
\frac{1}{|K|}
\left(\frac{C||b||_\infty|P|}{\beta}  \right)^{1-\frac{1}{\lambda}}
\left(\frac{nR}{r}\int_P |\grad b|\rho\right)^{\frac{1}{\lambda}}
\\ \\ &\le&
\frac{1}{|K|}
\left(\frac{2CR|P|}{\beta}  \right)^{1-\frac{1}{\lambda}}
\left(\frac{nR^2|\partial K|}{r} \sqrt{\delta_u(K)}\right)^{\frac{1}{\lambda}}
\\ \\ &=&
\frac{|\partial K|^{\frac{1}{\lambda}}}{|K|}
\left(\frac{2C|P|}{\beta}  \right)^{1-\frac{1}{\lambda}}
\frac{n^\frac{1}{\lambda}R^{1+\frac{1}{\lambda}}}{r^\frac{1}{\lambda}}
\delta_u(K)^{\frac{1}{2\lambda}}
\\ \\ &\le&
\frac{|\partial K|}{|K|}
\left(\frac{C}{\beta}  \right)^{1-\frac{1}{\lambda}}
\frac{n^\frac{1}{\lambda}R^{1+\frac{1}{\lambda}}}{r^\frac{1}{\lambda}}
\delta_u(K)^{\frac{1}{2\lambda}}
\\ \\ &\le&
\left(\frac{C}{\beta}  \right)^{1-\frac{1}{\lambda}}
\left( n\frac{R}{r} \right)^{1+\frac{1}{\lambda}}
\delta_u(K)^{\frac{1}{2\lambda}}.
\end{eqnarray*}
The last two inequalities hold since $P$ is a $n-1$ dimensional set
contained in $S_u K$, thus $2|P| \le |\partial S_u K| \le
|\partial K|$. Moreover, by Lemma \ref{Lem_Switch-Vol-Surf.Area}, $\frac{|\partial K|}{|K|}\le \frac{n}{r}$.
\end{proof}

\section{Rate of convergence}
In this section we prove Theorem \ref{Thm_Main}. The proof is based
on the following idea. Assume that $|K|=|D_n|$. Due to Theorem
\ref{thm-SteinerStability}, as long as one can find a direction $u$
for which $A(K, R_uK)$ is not very small, there exists a Steiner
symmetrization which reduces the surface area of $K$ by a factor.
Since the surface area cannot drop below $n|D_n|$ (isoperimetric
inequality), the number of such operations is bounded. Next, one has
to show that if $A(K, R_uK)$ is small in every direction, then so is
$A(K,D_n)$. Let us formulate this last statemnt precisely before
proving the main theorem.

\begin{lem}\label{lem-CloseToReflections}
Let $K\subset \R^n$ be a compact star shaped body and let $\varepsilon
>0$. Denote by $R_u$ the reflection with respect to $u^\perp$. If
$A(K, R_uK)<\varepsilon$ for all
$u\in S^{n-1}$, then $A(K, D_n) < 4n\varepsilon$.
\end{lem}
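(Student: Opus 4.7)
The plan is to convert the hypothesis (which is modulated by translations) into an untranslated $L^1$ bound on the radial function of $K$, and then to average this bound over the action of $O(n)$ to compare $K$ with a ball. By the scale-invariance of the Nikodym pseudo-metric I may assume $|K| = |D_n| = \kappa_n$, and the hypothesis is invariant under translation of $K$, so I may also shift $K$ so that its centre of mass lies at the origin. Because $K$ is star-shaped (and the centre of mass of a star-shaped body can be placed in its kernel), the origin then lies in the kernel of $K$, so $K$ is described by a radial function $\rho : S^{n-1} \to (0,\infty)$; set $h := \rho^n$. Then $\int_{S^{n-1}} h\,d\sigma = |S^{n-1}|$, so $h$ has mean $1$ on the sphere, and slicing in the radial variable gives
\[
|K \Delta R_u K| = \frac{1}{n}\int_{S^{n-1}}|h - h\circ R_u|\,d\sigma,\qquad |K \Delta D_n| = \frac{1}{n}\int_{S^{n-1}}|h - 1|\,d\sigma.
\]

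The first step is to pass from $A(K, R_u K) < \varepsilon$ to an untranslated estimate $|K \Delta R_u K| \le 4\varepsilon\kappa_n$. The hypothesis supplies $t_u \in \mathbb{R}$ with $|K \Delta (R_u K + t_u u)| < \varepsilon \kappa_n$. Since the centre of mass of $R_u K + t_u u$ is $t_u u$ while that of $K$ is $0$, comparing first moments forces $|t_u|\,\kappa_n \le \int_{K\Delta(R_u K+t_u u)}|x|\,dx$, so that $|t_u|$ is $O(\varepsilon R)$ with $R$ the outer radius of $K$. A direct bound on $|R_u K \Delta (R_u K + t_u u)|$ coming from the star-shaped structure, combined with the triangle inequality, then yields $|K\Delta R_u K| \le 4\varepsilon\kappa_n$, and hence $\int_{S^{n-1}}|h - h\circ R_u|\,d\sigma \le 4\varepsilon|S^{n-1}|$ for every $u \in S^{n-1}$.

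The second step iterates over $O(n)$. By Cartan--Dieudonn\'e, every $g \in O(n)$ is a product of at most $n$ reflections $R_{u_i}$; since $O(n)$ acts by isometries on $L^1(S^{n-1})$, the triangle inequality gives
\[
\int_{S^{n-1}}|h - h\circ g|\,d\sigma \le 4n\varepsilon\,|S^{n-1}|\qquad\text{for every } g \in O(n).
\]
Integrating this estimate against the Haar measure on $O(n)$, and using that $\int_{O(n)} h(g\theta)\,dg = 1$ for every $\theta \in S^{n-1}$, Jensen's inequality yields $\int_{S^{n-1}}|h-1|\,d\sigma \le 4n\varepsilon|S^{n-1}|$, i.e.\ $|K \Delta D_n| \le 4n\varepsilon\kappa_n$ and thus $A(K, D_n) \le 4n\varepsilon$.

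The main obstacle is the first step: the hypothesis is stated only modulo a translation $t_u u$, but in order to iterate reflections through a common origin I must replace each affine reflection by the linear $R_u$; the resulting translation loss $|R_u K \Delta (R_u K + t_u u)|$ must be controlled via the centre-of-mass bound on $|t_u|$ together with the star-shaped structure of $K$, and this is precisely where the constant $4$ in the conclusion is produced.
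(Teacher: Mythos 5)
Your overall strategy matches the paper's: pass from reflections to all of $O(n)$ via Cartan--Dieudonn\'e (the paper's ``every isometry is a product of at most $n$ reflections''), then average over the orthogonal group to compare the radial function with a constant. Your step~3 is a clean variant of what the paper does: the paper splits $S^{n-1}$ into $A=\{\rho\ge 1\}$ and $B=\{\rho\le 1\}$ and uses $\max(\sigma(A),\sigma(B))\ge\tfrac12$, which is where its factor of $4$ comes from; your Jensen-type estimate $\int|h-1|\,d\sigma\le\iint|h(x)-h(y)|\,d\sigma\,d\sigma$ loses no constant at that stage. Both are valid routes to the final averaging.

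The weak point is your step~1, and it is the step the whole argument hinges on. You are right to flag the translation infimum in the definition of $A$: the paper's own proof is loose here, writing $A(K,uK)=\frac{|K\Delta uK|}{|K|}$ in \eqref{Eq_SymmDiffIsometry}, whereas only $A(K,uK)\le\frac{|K\Delta uK|}{|K|}$ is immediate, and the argument needs the reverse direction to extract an untranslated radial estimate from the hypothesis (in the application inside Theorem~\ref{Thm_Main} the body has been symmetrized and is well-centred, which is presumably what is implicitly used). However, the fix you sketch does not close the gap, for three reasons. First, the parenthetical ``the centre of mass of a star-shaped body can be placed in its kernel'' is false in general (a long thin ``L'' is a counterexample), so after centring at the centroid the radial function $\rho$ may not even be defined. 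Second, the first-moment estimate only gives $|t_u|\lesssim R\varepsilon$ where $R$ is the outer radius of $K$, and $R$ is not controlled by the volume for star-shaped bodies. Third, the ``translation loss'' $|R_uK\Delta(R_uK+t_uu)|$ scales like $|t_u|$ times a surface-area-type quantity, which again is unbounded in terms of $|K|$ alone; so the claimed bound $|K\Delta R_uK|\le 4\varepsilon\kappa_n$, and hence the constant $4$, is not substantiated. To make step~1 rigorous one needs either additional assumptions (convexity, or a priori bounds on $R/r$ and $|\partial K|/|K|$ of the kind available in the proof of Theorem~\ref{Thm_Main}) or a different choice of centre; as written it is a plan, not a proof.
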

\begin{proof}
First note that $A(K, R_{u_m}\dots R_{u_1}K) < m\varepsilon$ for any
$m \le n$. The proof goes by induction, where the case $m=1$ is
assumed to hold. For $m \ge 2$ one has
\begin{eqnarray*}
A(K, R_{u_m}\ldots R_{u_1}K) &=& A(R_{u_m}K, R_{u_{m-1}}\ldots R_{u_1}K) \\
 &\le& A(R_{u_m}K, K) + A(K, R_{u_{m-1}}\ldots R_{u_1}K)  \\
&<& \varepsilon + (m-1)\varepsilon = m\varepsilon.
\end{eqnarray*}
Every isometry $u\in O(n)$ is generated by at most $n$ reflections,
thus $A(K, uK) < n\varepsilon$. This may be written as follows, in
terms of the radial function $\rho$ of $K$:
\begin{equation}\label{Eq_SymmDiffIsometry}
A(K, uK) = \frac{|K \Delta uK|}{|K|} = \frac{|D_n|}{|K|}
\int_{S^{n-1}} |\rho(x)^n - \rho(ux)^n| d\sigma(x) < n\varepsilon,
\end{equation}
where $\sigma$ is the normalized Haar measure on the sphere. Without
loss of generality, assume from now on that $|K|=|D_n|$. Note that if
$u$ is selected at random with respect to the Haar measure on
$SO(n)$, then for every $x\in S^{n-1}$, the point $ux$ is distributed
uniformly on $S^{n-1}$. Thus averaging (\ref{Eq_SymmDiffIsometry})
over $u\in SO(n)$ yields:
\begin{equation}\label{Eq_SymmDiffBall}
\int_{S^{n-1}}
\int_{S^{n-1}} |\rho(x)^n - \rho(y)^n| d\sigma(x) d\sigma(y)
< n\varepsilon.
\end{equation}
Consider the sets $A=\{x \in S^{n-1} : \rho(x) \ge 1\}$ and
$B=\{x \in S^{n-1} : \rho(x) \le 1\}$. Since $|K \setminus D_n| =
|D_n \setminus K|$, we have
\begin{eqnarray*}
\frac{1}{2}A(K, D_n) &=& \frac{1}{2}
\int_{S^{n-1}} |\rho(x)^n - 1| d\sigma(x) =
\int_A         |\rho(x)^n - 1| d\sigma(x)\\ \\ &=&
\frac{1}{\sigma(B)} \int_B \int_A
|\rho(x)^n - 1| d\sigma(x)d\sigma(y)\\ \\&\le&
\frac{1}{\sigma(B)} \int_B \int_A
|\rho(x)^n - \rho(y)^n| d\sigma(x) d\sigma(y)\\ \\&\le&
\frac{1}{\sigma(B)} \int_{S^{n-1}} \int_{S^{n-1}}
|\rho(x)^n-\rho(y)^n|d\sigma(x)d\sigma(y).
\end{eqnarray*}
This implies $A(K,D_n) < \frac{2n\varepsilon}{\sigma(B)}$ by
\eqref{Eq_SymmDiffBall}, and similarly one has $A(K,D_n) <
\frac{2n\varepsilon}{\sigma(A)}$, so combining the two we get
\[
A(K, D_n) < 4n\varepsilon. 
\]
\end{proof}

\begin{proof}[{\bf Proof of Theorem \ref{Thm_Main}.}]
Assume without loss of generality that $|K| = |D_n|$.
Apply $n$ Steiner symmetrizations to $K$ with respect to some
orthogonal basis to obtain a new convex body $K_0$ which is
unconditional, and in particular centrally symmetric. By John's
theorem, there exists an ellipsoid $\mathcal{E}$ such that
\[
\mathcal{E} \subset K_0 \subset  \sqrt{n}\mathcal{E}.
\]
There exist $n$ Steiner symmetrizations which transform $\mathcal{E}$
to an Euclidean ball (see \cite{KM_Isomorph-Stein}, Lemma
2.6). Applying these symmetrizations to $K_0$, we obtain a body $K_1$
satisfying
\[
r_1D_n \subset K_1 \subset \sqrt{n}r_1D_n,
\]
for some $r_1 > 0$. Thus the inner and outer radii of $K_1$ satisfy
$\frac{R}{r} \le \sqrt{n}$. Note that $|D_n| = |K_1| \le |\sqrt{n}
rD_n|$ which implies that $\frac{1}{r}\le \sqrt{n}$. Hence, by Lemma
\ref{Lem_Switch-Vol-Surf.Area} 
\[
|\partial K_1| \leq \frac{n}{r}|K_1| \le n^{3/2}|D_n|.
\]
Fix $\varepsilon_0 > 0$. If there exists $u_1 \in S^{n-1}$
with $A(K_1, S_{u_1}K_1) > \varepsilon_0$, denote $K_2 = S_{u_1} K_1$.
By Theorem \ref{thm-SteinerStability}, combined with the bound $\frac{R}{r} \le \sqrt{n}$, we get
\begin{eqnarray*} \label{eq-SurfAreaReduction}
n|D_n| = |\partial D_n| \le |\partial K_2| &\le&
|\partial K_1|
\left(1 - \frac{A(K_1, S_{u_1} K_1)^{2\lambda}}{n^{3(\lambda+1)} }\left(\frac{\beta}{C}\right)^{2(\lambda-1)}
 \right)
\\ \\ &\le &
|\partial K_1|
\left( 1 -
\frac{\varepsilon_0^{2\lambda}}{n^{3(\lambda+1)} } \left(\frac{\beta}{C}\right)^{2(\lambda-1)} \right).
\end{eqnarray*}
If there exists $u_2\in S^{n-1}$ with $A(K_2, S_{u_2}K_2) >
\varepsilon_0$, denote by $K_3 = S_{u_2}K_2$. Continue this process
for $m$ steps. Then $K_{m+1}$ satisfies
\begin{eqnarray*}
n|D_n| \le |\partial K_{m+1}| &\le& |\partial K_1|\left(
1 - \left(\frac{\beta}{C}\right)^{2(\lambda-1)}
\frac{\varepsilon_0^{2\lambda}}{n^{3(\lambda+1)}} \right)^m
\\ \\ &\le &
n^{3/2}|D_n|\left( 1 - \left(\frac{\beta}{C}\right)^{2(\lambda-1)}
\frac{\varepsilon_0^{2\lambda}}{n^{3(\lambda+1)} } \right)^m.
\end{eqnarray*}
Thus, 
\[
0 \leq \frac{1}{2}\log n + m \log \left( 1 - \left(\frac{\beta}{C}\right)^{2(\lambda-1)}
\frac{\varepsilon_0^{2\lambda}}{n^{3(\lambda+1)} } \right).
\]
Set $\lambda = 2 + \frac{1}{\log n}$, so $\beta=\frac{1}{1 +\log n}$.
Hence, the number of such steps is bounded by 
\begin{equation}\label{Eq_mBound}
m \le \left(C(1 +\log n)\right)^{2 + \frac{2}{\log n}}\log n
\left(
\frac{n^{9 + \frac{3}{\log n}}}{\varepsilon_0^{4 + \frac{2}{\log n}}}
\right) <
c \left(
\frac{ n^9 \log^3n}{\varepsilon_0^{4 + \frac{2}{\log n}}}
\right),
\end{equation}
for some $c>0$. The resulting body $K'$ thus satisfies $A(K', S_u K') < \varepsilon_0$ for all $u \in S^{n-1}$, which in turn implies that $A(K', R_u K') < 2\varepsilon_0$ for all $u \in S^{n-1}$, where $R_u K_m$ is the reflection of $K_m$ with respect to $u^\perp$. By Lemma \ref{lem-CloseToReflections} we conclude that 
\[
A(K_m, D_n) < 8n\varepsilon_0.
\]
Let $\varepsilon>0$. Plugging $\varepsilon_0 = \varepsilon/(8n)$ into
\eqref{Eq_mBound} completes the proof.
\end{proof}

\begin{rem}{\rm
The dependence in the dimension $n$ in Theorem \ref{Thm_Main} is
clearly not optimal (as mentioned before, the sharp bound is believed
to be linear). For example, the bound for the ratio $R/r$ may be
reduced to a constant, rather than $\sqrt{n}$, which results in
decreasing the power $13$ to $10$. This may be done by one of the
isomrphic results mentioned in the introduction.
}\end{rem}

\noindent Dan Itzhak Florentin, danflorentin@gmail.com\\
\noindent Department of Mathematics, Weizmann Institute of Science, Rehovot, Israel.\\

\noindent Alexander Segal, segalale@gmail.com\\
\noindent School of Mathematical Science, Tel Aviv University, Tel Aviv, Israel.
\end{document}